\theoremstyle{cute}
\newtheorem{thm}{Theorem}
\newtheorem{cor}[thm]{Corollary}
\newtheorem{prop}[thm]{Proposition}
\theoremstyle{definition}
\newtheorem{ex}[thm]{Example}
\theoremstyle{remark}
\numberwithin{equation}{section}
\begin{document}
\title{The Case for Raabe's Test}
\markright{The Case for Raabe's Test}
\author[C. N. B. Hammond]{Christopher N. B. Hammond}

\date{January 24, 2019}
\address{Department of Mathematics and Statistics\\
Connecticut College\\
New London, CT 06320}
\email{cnham@conncoll.edu}

\maketitle
\thispagestyle{empty}

\begin{abstract}
Among the techniques for determining the convergence of a series, Raabe's Test remains relatively unfamiliar to most mathematicians.  We present several results relating to Raabe's Test that do not seem to be widely known, making the case that Raabe's Test should be featured more prominently in undergraduate calculus and analysis courses.  In particular, we demonstrate that Raabe's Test may be viewed as an implicit comparison with a $p$-series, in the same manner that the Ratio Test and the Root Test constitute an implicit comparison with a geometric series.  Moreover, Raabe's Test can sometimes simplify the process for determining conditional convergence.
\end{abstract}

\section{Raabe's Test}\label{S:intro}

Although Raabe's Test was first introduced  in 1832, its importance and interpretation have largely been overlooked.  The purpose of this article is to expand the scope of Raabe's Test, illustrating its benefits and situating it within its proper context.

As most readers are probably aware, the Ratio Test and the Root Test can both be viewed as an implicit comparison with a geometric series; that is, they tell us when a series ``behaves like" a certain geometric series. Likewise, the version of Raabe's Test that we are presenting will indicate when a series ``behaves like" $\sum_{n=1}^{\infty}1/n^{p}$ for a particular $p$.  In a sense, one can view Raabe's Test as a type of comparison test that self-selects the appropriate $p$-series with which to compare.

Indeed, it is not always obvious when a series is comparable to a $p$-series.  For example, consider
\begin{equation}\label{firstseries}
\sum_{n=1}^{\infty}\frac{(-1)^{n-1}(2n)!}{4^{n}(n!)^{2}}\text{.}
\end{equation}
The standard convergence tests are not especially helpful in this instance. The Ratio Test and the Root Test are both inconclusive.  The Comparison and Limit Comparison Tests, besides applying only to series with non-negative terms, require a predetermined series with which to compare.  It is not even clear whether the hypotheses of the Alternating Series Test are satisfied.  Our version of Raabe's Test will show that this series essentially behaves like
\[
\sum_{n=1}^{\infty}\frac{(-1)^{n-1}}{n^{1/2}}\text{.}
\]
Hence, with only a single computation, we will demonstrate that series (\ref{firstseries}) is conditionally convergent.  (See Example \ref{goodex} below.)
In other words, Raabe's Test will allow us to perform a comparison without knowing beforehand to what we are comparing.  Moreover, the test will sometimes eliminate the need for employing a multistep process to determine conditional convergence.\bigskip

The original version of Raabe's Test, as stated by Joseph Ludwig Raabe \cite{raabe}, says that a series $\sum_{n=1}^{\infty}a_{n}$ consisting of positive terms converges whenever
\[
\lim_{n\to\infty}n\left(\frac{a_{n}}{a_{n+1}}-1\right)>1
\]
and diverges whenever
\[
\lim_{n\to\infty}n\left(\frac{a_{n}}{a_{n+1}}-1\right)<1\text{.}
\]
There are several minor variants of Raabe's result (see \cite[p.\ 39]{bromwich}, \cite[p.\ 285]{knopp}, or \cite{prus}), but the test is typically stated for series with only positive terms.  Raabe's Test can provide more information if we consider series that include both positive and negative terms, as illustrated by this slightly subtler version of the test.

\begin{thm}[Raabe's Test]\label{raabestest}
Suppose $\sum_{n=1}^{\infty}a_{n}$ is a series consisting of nonzero terms, for which
\[
p=\lim_{n\to\infty}n\left(\left|\frac{a_{n}}{a_{n+1}}\right|-1\right)
\]
exists (as a finite value).  If $p>1$, the series converges absolutely.  If $p<0$, the series diverges.  If $0\leq p<1$, the series is either conditionally convergent or divergent.  If $p=1$, the test provides no information.
\end{thm}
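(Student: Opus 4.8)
The plan is to treat the four cases in turn, in each one transferring the question to the positive-term series $\sum_{n=1}^{\infty}|a_{n}|$, for which $\lim_{n\to\infty}n\bigl(|a_{n}|/|a_{n+1}|-1\bigr)=p$ by hypothesis, so that the classical form of Raabe's Test recalled above applies directly. If $p>1$, that test yields convergence of $\sum|a_{n}|$, hence absolute convergence of $\sum a_{n}$. If $p<1$, it yields divergence of $\sum|a_{n}|$, so $\sum a_{n}$ is not absolutely convergent; when $0\le p<1$ this already gives the stated conclusion, since a series that fails to converge absolutely is either conditionally convergent or divergent (and both possibilities genuinely arise: $\sum(-1)^{n-1}/n^{1/2}$ and $\sum 1/n^{1/2}$ both give $p=1/2$ yet behave differently). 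When $p<0$ one gets more: for all large $n$, the inequality $n\bigl(|a_{n}|/|a_{n+1}|-1\bigr)<0$ forces $|a_{n+1}|>|a_{n}|$, so $(|a_{n}|)$ is eventually increasing and, the terms being nonzero, $a_{n}\not\to0$, whence $\sum a_{n}$ diverges by the Term Test.

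In keeping with the paper's theme, the two appeals to the classical test can be replaced by explicit ratio comparisons with $p$-series. For $p>1$, fix $s$ with $1<s<p$ and set $c_{n}=1/n^{s}$; from $n(c_{n}/c_{n+1}-1)\to s<p$ one gets $|a_{n+1}|/|a_{n}|<c_{n+1}/c_{n}$ for all large $n$, and a term-by-term comparison of ratios against the convergent series $\sum c_{n}$ shows $\sum|a_{n}|$ converges. For $0\le p<1$, pick $q$ with $p<q<1$; then $|a_{n}|/|a_{n+1}|<1+q/n<(n+1)/n$ for all large $n$, so $|a_{n+1}|/|a_{n}|>n/(n+1)$, and comparison against the divergent harmonic series shows $\sum|a_{n}|$ diverges.

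It remains to justify that $p=1$ is genuinely inconclusive, which I would do by exhibiting series with $p=1$ realizing each possible behavior: $\sum 1/n$ (divergent), $\sum(-1)^{n-1}/n$ (conditionally convergent), and $\sum_{n\ge2}1/\bigl(n(\ln n)^{2}\bigr)$ (absolutely convergent, by the Integral Test). I expect the one computation requiring real care to be checking that this last series has $p=1$: expanding $(\ln(n+1))^{2}=(\ln n)^{2}+2(\ln n)\ln(1+1/n)+\bigl(\ln(1+1/n)\bigr)^{2}$ and keeping the leading term of $n\bigl((n+1)(\ln(n+1))^{2}/(n(\ln n)^{2})-1\bigr)$, one finds the limit is $1$. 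Everything else follows at once from the classical test (or from the comparisons above) together with the Term Test, so this single asymptotic estimate is the only real obstacle.
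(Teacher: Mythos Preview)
Your proof is correct, but your handling of the $p>1$ case differs from the paper's. You reduce to the classical positive-term version of Raabe's Test (or, in your alternative, compare ratios against $\sum 1/n^{s}$ with $1<s<p$); the paper instead gives a self-contained telescoping argument. Setting $p_{1}=(p+1)/2$, it shows $n|a_{n}|-(n+1)|a_{n+1}|>(p_{1}-1)|a_{n+1}|>0$ for large $n$, so $(n|a_{n}|)$ is eventually decreasing and convergent; the telescoping series $\sum\bigl(n|a_{n}|-(n+1)|a_{n+1}|\bigr)$ therefore converges, and comparison yields convergence of $\sum|a_{n}|$. The virtue of this route is that it requires no prior knowledge of $p$-series convergence---a point the paper makes explicitly---whereas your ratio-comparison alternative presupposes that $\sum 1/n^{s}$ converges for $s>1$. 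On the other hand, your approach dovetails more transparently with the paper's thematic claim that Raabe's Test is an implicit $p$-series comparison.

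For $0\le p<1$ and $p<0$ your arguments coincide with the paper's (the inequality $|a_{n}|/|a_{n+1}|<(n+1)/n$ is exactly what the paper derives). For $p=1$ your examples match those the paper eventually produces in Example~\ref{raabeex}; the paper defers the computation for $\sum 1/\bigl((n+1)(\log(n+1))^{2}\bigr)$ to Proposition~\ref{raabealg}, combining the values for $1/(n+1)$ and $1/\log(n+1)$ rather than expanding directly as you do.
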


\begin{proof}
Suppose, first of all, that $p>1$.  Since $(p-1)/2$ is a positive number, there is a natural number $N$ such that
\[
\left|n\left(\left|\frac{a_{n}}{a_{n+1}}\right|-1\right)-p\right|<\frac{p-1}{2}\text{,}
\]
and hence
\[
n\left(\left|\frac{a_{n}}{a_{n+1}}\right|-1\right)>p-\frac{p-1}{2}=\frac{p+1}{2}\text{,}
\]
whenever $n\geq N$.  Let $p_{1}=(p+1)/2$, which is also greater than $1$.  Observe that
\[
n\left|\frac{a_{n}}{a_{n+1}}\right|>p_{1}+n
\]
and thus
\[
n|a_{n}|>(p_{1}+n)|a_{n+1}|
\]
for $n\geq N$, from which we see that
\begin{equation}\label{raabeprime}
n|a_{n}|-(n+1)|a_{n+1}|>(p_{1}-1)|a_{n+1}|\text{.}
\end{equation}
Since $p_{1}-1$ is positive, it follows that
\[
n|a_{n}|>(n+1)|a_{n+1}|
\]
for $n\geq N$.  Since every term $n|a_{n}|$ is positive, the Monotone Convergence Theorem guarantees that the sequence $\bigl(n|a_{n}|\bigr)$ converges to some limit $x$.  Consider the series
\[
\sum_{n=1}^{\infty}b_{n}=\sum_{n=1}^{\infty}\bigl(n|a_{n}|-(n+1)|a_{n+1}|\bigr)\text{.}
\]
The $m$th partial sum of $\sum_{n=1}^{\infty}b_{n}$ is equal to
\begin{align*}
\bigl(|a_{1}|-2|a_{2}|\bigr)&+\bigl(|2a_{2}|-3|a_{3}|\bigr)+\ldots+\bigl(m|a_{m}|-(m+1)|a_{m}|\bigr)\\
&=|a_{1}|-(m+1)|a_{m+1}|\text{,}
\end{align*}
so the series converges to $|a_{1}|-x$.  Therefore the Comparison Test, along with (\ref{raabeprime}), shows that
\[
\sum_{n=1}^{\infty}(p_{1}-1)|a_{n+1}|
\]
is convergent, as is
\[
\sum_{n=1}^{\infty}|a_{n+1}|=\frac{1}{p_{1}-1}\sum_{n=1}^{\infty}(p_{1}-1)|a_{n+1}|\text{.}
\]
Consequently the series $\sum_{n=1}^{\infty}a_{n}$ converges absolutely.

Now suppose that $p<0$.  Since $-p$ is a positive number, there is a natural number $N$ such that
\[
\left|n\left(\left|\frac{a_{n}}{a_{n+1}}\right|-1\right)-p\right|<-p\text{,}
\]
and hence
\[
n\left(\left|\frac{a_{n}}{a_{n+1}}\right|-1\right)<p-p=0\text{,}
\]
whenever $n\geq N$.  Consequently
\[
\left|\frac{a_{n}}{a_{n+1}}\right|-1<0
\]
for $n\geq N$, which means that $|a_{n}|<|a_{n+1}|$ whenever $n\geq N$.  Therefore the sequence $(a_{n})$ does not converge to $0$, so the series $\sum_{n=1}^{\infty}a_{n}$ is divergent.

Finally, suppose that $0\leq p<1$.  Since $(1-p)/2$ is a positive number, there is a natural  number $N$ such that
\[
\left|n\left(\left|\frac{a_{n}}{a_{n+1}}\right|-1\right)-p\right|<\frac{1-p}{2}\text{,}
\]
and hence
\[
n\left(\left|\frac{a_{n}}{a_{n+1}}\right|-1\right)<p+\frac{1-p}{2}=\frac{1+p}{2}\text{,}
\]
whenever $n\geq N$.  Therefore
\[
n\left|\frac{a_{n}}{a_{n+1}}\right|-(n+1)<\frac{1+p}{2}-1=\frac{p-1}{2}<0
\]
for $n\geq N$.  Consequently
\[
n|a_{n}|<(n+1)|a_{n+1}|
\]
for $n\geq N$, which means that $N|a_{N}|\leq n |a_{n}|$ whenever $n\geq N$.  Taking $M=N|a_{N}|$, we see that
\[
\frac{M}{n}\leq|a_{n}|
\]
whenever $n\geq N$.  Thus the Comparison Test shows that $\sum_{n=1}^{\infty}|a_{n}|$ is divergent, so $\sum_{n=1}^{\infty}a_{n}$ is either conditionally convergent or divergent.

Example \ref{raabeex}, which appears in the next section, will demonstrate that any outcome is possible when $p=1$.  Namely, such a series may diverge, may converge conditionally, or may converge absolutely.
\end{proof}

Observe that the quantity
\[
p=\lim_{n\to\infty}n\left(\left|\frac{a_{n}}{a_{n+1}}\right|-1\right)
\]
from Raabe's Test cannot exist unless
\[
\lim_{n\to\infty}\left(\left|\frac{a_{n}}{a_{n+1}}\right|-1\right)=0\text{.}
\]
Therefore
\[
\lim_{n\to\infty}\left|\frac{a_{n+1}}{a_{n}}\right|
\]
must exist and be equal to $1$.  In other words, Raabe's Test presupposes the inconclusiveness of the Ratio Test.  By Theorem 3.37 in \cite{rudin}, the existence of a finite value of $p$ also guarantees that the Root Test is inconclusive.\bigskip

As mentioned above, Raabe's Test yields no information when $p=1$.  For any value $0\leq p<1$, there are examples for which the series converges conditionally and examples for which the series diverges.  (See Example \ref{raabeex} below.)  In general, it can be difficult to determine the behavior of a series when $0\leq p\leq 1$, although the next two results are rather helpful.

\begin{prop}\label{raabezeroprop}
Suppose $(a_{n})$ is a sequence consisting of nonzero numbers, for which
\[
p=\lim_{n\to\infty}n\left(\left|\frac{a_{n}}{a_{n+1}}\right|-1\right)
\]
exists. If $p>0$, the sequence $(a_{n})$ converges to $0$.
\end{prop}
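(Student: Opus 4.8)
The plan is to extract from the hypothesis $p>0$ a fixed positive constant $q$ with $q<p$, use it to show that $\bigl(|a_n|\bigr)$ is eventually strictly decreasing (hence convergent to some nonnegative limit $L$), and then show that $L$ cannot be positive. This handles every case $p>0$ at once; in particular, when $p>1$ the conclusion also follows immediately from the absolute convergence supplied by Theorem~\ref{raabestest}, so the argument below is really needed only for $0\le p\le 1$, but nothing is lost by running it in general.

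First I would fix any $q$ with $0<q<p$. Since $p-q>0$, there is a natural number $N$ such that $n\bigl(|a_n/a_{n+1}|-1\bigr)>q$ whenever $n\geq N$, exactly as in the opening of the proof of Theorem~\ref{raabestest}. Rearranging gives $n|a_n|>(q+n)|a_{n+1}|$ for $n\geq N$. Because $q|a_{n+1}|>0$, this already yields $|a_n|>|a_{n+1}|$ for $n\geq N$, so the sequence $\bigl(|a_n|\bigr)$ is eventually decreasing and bounded below by $0$; by the Monotone Convergence Theorem it converges to some limit $L\geq 0$.

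It then remains only to rule out $L>0$. From $n|a_n|>(q+n)|a_{n+1}|$ we obtain $n|a_n|-n|a_{n+1}|>q|a_{n+1}|$, and hence
\[
|a_n|-|a_{n+1}|>\frac{q|a_{n+1}|}{n}\geq\frac{qL}{n}
\]
for $n\geq N$, using $|a_{n+1}|\geq L$. Summing from $n=N$ to $n=M$, the left-hand side telescopes to $|a_N|-|a_{M+1}|\leq|a_N|$, while the right-hand side is $qL\sum_{n=N}^{M}1/n$. If $L>0$, letting $M\to\infty$ forces the bounded quantity $|a_N|$ to dominate a divergent harmonic-type sum, a contradiction. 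Therefore $L=0$, i.e.\ $(a_n)$ converges to $0$.

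I do not anticipate a genuine obstacle; the only points requiring care are keeping the inequalities pointed in the correct direction when passing from $|a_n/a_{n+1}|$ to $n|a_n|$ versus $(q+n)|a_{n+1}|$, and choosing the auxiliary constant $q$ strictly between $0$ and $p$ (one need not insist $q\le 1$). As an alternative to the telescoping estimate, one could instead bound $|a_n|/|a_N|$ by the partial products $\prod_{k=N}^{n-1} k/(k+q)$ and note that $\sum_{k\ge N} q/(k+q)=\infty$ makes the infinite product vanish; I would prefer the telescoping version since it avoids invoking any theory of infinite products.
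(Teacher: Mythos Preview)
Your proof is correct and follows essentially the same route as the paper's own argument: the paper picks the specific auxiliary constant $q=p/2$, deduces that $\bigl(|a_n|\bigr)$ is eventually strictly decreasing, applies the Monotone Convergence Theorem, and then rules out a positive limit by telescoping $|a_n|-|a_{n+1}|>\dfrac{qL}{n}$ against the divergence of the harmonic series. Your choice of an arbitrary $q\in(0,p)$ and your slightly different bookkeeping of the inequalities are cosmetic differences only.
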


\begin{proof}
Since $p/2$ is a positive number, there is a natural number $N$ such that
\[
\left|n\left(\left|\frac{a_{n}}{a_{n+1}}\right|-1\right)-p\right|<\frac{p}{2}\text{,}
\]
and hence
\begin{equation}\label{raabealt}
n\left(\left|\frac{a_{n}}{a_{n+1}}\right|-1\right)>p-\frac{p}{2}=\frac{p}{2}>0\text{,}
\end{equation}
whenever $n\geq N$.  Consequently
\[
\left|\frac{a_{n}}{a_{n+1}}\right|-1>0
\]
for $n\geq N$, so $|a_{n}|>|a_{n+1}|$ whenever $n\geq N$.  The Monotone Convergence Theorem guarantees that $(|a_{n}|)$ converges to a non-negative number $x$.  We simply need to show that $x$ is equal to $0$.

Suppose, for the sake of contradiction, that $x>0$.  Since $|a_{n}|>x$ for $n\geq N$, it follows from (\ref{raabealt}) that
\[
\frac{n(|a_{n}|-|a_{n+1}|)}{x}>n\left(\frac{|a_{n}|-|a_{n+1}|}{|a_{n+1}|}\right)=n\left(\left|\frac{a_{n}}{a_{n+1}}\right|-1\right)>\frac{p}{2}
\]
when $n\geq N$.  Therefore
\[
|a_{n}|-|a_{n+1}|>\frac{px}{2n}\text{,}
\]
and hence
\begin{align*}
|a_{m}|-|a_{n}|&=(|a_{m}|-|a_{m+1}|)+(|a_{m+1}|-|a_{m+2}|)+\ldots+(|a_{n-1}|-|a_{n}|)\\
&>\frac{px}{2}\left(\frac{1}{m}+\frac{1}{m+1}+\ldots+\frac{1}{n-1}\right)\text{,}
\end{align*}
whenever $n>m\geq N$.  While $(|a_{n}|)$ is a Cauchy sequence, the sequence of partial sums for the harmonic series is divergent.  Thus we have obtained a contradiction, so we conclude that $x$ must be $0$.
\end{proof}

In certain instances, such as series (\ref{firstseries}), it is reasonable to use Proposition \ref{raabezeroprop} to show that $(a_{n})$ converges to $0$. (See also Problem 2047 in \cite{problems}.)  The main purpose of this proposition, though, is to expand the scope of Raabe's Test.

\begin{thm}[Raabe's Alternating Series Test]\label{rast}
Suppose $(a_{n})$ is a sequence consisting of positive numbers, for which
\[
p=\lim_{n\to\infty}n\left(\frac{a_{n}}{a_{n+1}}-1\right)
\]
exists.  If $0<p\leq 1$, the series
\[
\sum_{n=1}^{\infty}(-1)^{n-1}a_{n}=a_{1}-a_{2}+a_{3}-a_{4}+\ldots
\]
is convergent.  If $0<p<1$, the series converges conditionally.
\end{thm}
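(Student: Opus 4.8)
The plan is to reduce everything to the classical Alternating Series Test, using the two results already established. First I would invoke Proposition \ref{raabezeroprop}: since $p>0$, the sequence $(a_{n})$ converges to $0$. Next I would observe that the same hypothesis yields slightly more. Because $p/2$ is positive, there is a natural number $N$ such that
\[
n\left(\frac{a_{n}}{a_{n+1}}-1\right)>p-\frac{p}{2}=\frac{p}{2}>0
\]
whenever $n\geq N$, and hence $a_{n}/a_{n+1}>1$, that is, $a_{n}>a_{n+1}$, for all $n\geq N$. So $(a_{n})$ is eventually decreasing and tends to $0$. The Alternating Series Test then shows that $\sum_{n=1}^{\infty}(-1)^{n-1}a_{n}$ converges; this settles the case $0<p\leq 1$. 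Since altering finitely many terms does not affect whether a series converges, the fact that monotonicity only holds from the index $N$ onward causes no difficulty.

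For the final assertion, suppose $0<p<1$. Set $c_{n}=(-1)^{n-1}a_{n}$, a sequence of nonzero terms, and note that $|c_{n}/c_{n+1}|=a_{n}/a_{n+1}$, so the limit appearing in Theorem \ref{raabestest}, applied to $\sum_{n=1}^{\infty}c_{n}$, is exactly this same value $p$. Since $0\leq p<1$, Theorem \ref{raabestest} tells us that $\sum_{n=1}^{\infty}c_{n}=\sum_{n=1}^{\infty}(-1)^{n-1}a_{n}$ is either conditionally convergent or divergent; as we have just shown it converges, it must converge conditionally. Equivalently, one can simply reuse the estimate $M/n\leq a_{n}$ obtained in the proof of Theorem \ref{raabestest} for the regime $0\leq p<1$, which shows directly (by comparison with the harmonic series) that $\sum_{n=1}^{\infty}a_{n}$ diverges, so the convergence established above cannot be absolute.

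I do not anticipate a genuine obstacle: the substance of the argument has already been packaged into Proposition \ref{raabezeroprop} and Theorem \ref{raabestest}, and the proof mainly consists in assembling these pieces and recognizing that the hypothesis $0<p\leq 1$ forces the situation in which the Alternating Series Test applies. The one point requiring a small amount of care is precisely that appeal to the Alternating Series Test, whose usual statement assumes monotone decrease of the terms, whereas Raabe's hypothesis only forces monotonicity past some index $N$; one should state explicitly that convergence is unaffected by the values of the first $N$ terms. Everything else is routine.
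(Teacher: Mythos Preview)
Your proposal is correct and follows essentially the same route as the paper: invoke Proposition~\ref{raabezeroprop} (and the eventual monotonicity established in its proof) to feed into the Alternating Series Test, then appeal to Theorem~\ref{raabestest} in the range $0<p<1$ to rule out absolute convergence. You are simply more explicit than the paper about the eventual-monotonicity caveat and about how Theorem~\ref{raabestest} is being applied to the signed terms $c_n=(-1)^{n-1}a_n$, but there is no substantive difference.
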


\begin{proof}
In addition to showing that $(a_{n})$ converges to $0$, the proof of Proposition \ref{raabezeroprop} shows that there is a natural number $N$ such that $a_{n}>a_{n+1}$ for $n\geq N$.  Hence the Alternating Series Test guarantees that the series stated above is convergent.  If $0<~p<1$, Raabe's Test shows that the series must, in fact, be conditionally convergent.
\end{proof}

Theorem \ref{rast} is somewhat unusual, in that it typically takes multiple steps to show that a series is conditionally convergent.

\begin{ex}\label{goodex}
Consider the series
\[
\sum_{n=1}^{\infty}\frac{(-1)^{n-1}(2n-1)!}{4^{n}(n!)^{2}}
\]
and
\[
\sum_{n=1}^{\infty}\frac{(-1)^{n-1}(2n)!}{4^{n}(n!)^{2}}
\]
and
\[
\sum_{n=1}^{\infty}\frac{(-1)^{n-1}(2n+1)!}{4^{n}(n!)^{2}}\text{,}
\]
the second of which was mentioned at the beginning of this article.  It is not difficult to demonstrate that these series have values $3/2$, $1/2$, and $-1/2$ with respect to Raabe's Test.  Therefore the first series converges absolutely, the second series converges conditionally, and the third series diverges.
\end{ex}

\section{Interpretation and Examples}

Our next observation is fundamental to our interpretation of Raabe's Test.

\begin{ex}\label{pseriesex}
Let $a_{n}=1/n^{p}$ for a real number $p$. Observe that
\[
n\left(\left|\frac{a_{n}}{a_{n+1}}\right|-1\right)=n\left(\left(\frac{n+1}{n}\right)^{p}-1\right)
=\frac{\left(1+\displaystyle\frac{1}{n}\right)^{p}-1}{\displaystyle\frac{1}{n}}\text{,}
\]
which converges to $f^{\prime}(1)$ for $f(x)=x^{p}$. Consequently the expression above converges to $p$.
\end{ex}

As we have previously mentioned, Raabe's Test should be viewed as an implicit comparison between $\sum_{n=1}^{\infty}|a_{n}|$ and the corresponding series $\sum_{n=1}^{\infty}1/n^{p}$.  In other words, if $\sum_{n=1}^{\infty}a_{n}$ has value $p$ with respect to Raabe's Test, the series behaves as if $|a_{n}|=1/n^{p}$. The only exceptions are the borderline cases where $p=0$ and $p=1$.  Note that the proof of Raabe's Test did not require any prior knowledge about $p$-series, except for the fact that the harmonic series diverges.

Viewing Raabe's Test from this perspective, one would anticipate certain behavior with respect to products and powers.  If $\sum_{n=1}^{\infty}|a_{n}|$ and $\sum_{n=1}^{\infty}|b_{n}|$ behave like $\sum_{n=1}^{\infty}1/n^{p}$ and $\sum_{n=1}^{\infty}1/n^{q}$, then $\sum_{n=1}^{\infty}|a_{n}b_{n}|$ should behave like $\sum_{n=1}^{\infty}1/n^{p+q}$ and $\sum_{n=1}^{\infty}|a_{n}|^{k}$ should behave like $\sum_{n=1}^{\infty}1/n^{kp}$.  The following proposition formalizes this intuition.

\begin{prop}\label{raabealg}
Suppose $(a_{n})$ and $(b_{n})$ are sequences consisting of nonzero numbers, for which
\[
p=\lim_{n\to\infty}n\left(\left|\frac{a_{n}}{a_{n+1}}\right|-1\right)
\]
and
\[
q=\lim_{n\to\infty}n\left(\left|\frac{b_{n}}{b_{n+1}}\right|-1\right)
\]
both exist.  In that case,
\[
\lim_{n\to\infty}n\left(\left|\frac{a_{n}b_{n}}{a_{n+1}b_{n+1}}\right|-1\right)
\]
exists and is equal to $p+q$.  Furthermore, for any real number $k$, the expression
\[
\lim_{n\to\infty}n\left(\left|\frac{a_{n}}{a_{n+1}}\right|^{k}-1\right)
\]
exists and is equal to $kp$.
\end{prop}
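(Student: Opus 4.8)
The plan is to reduce both claims to limits already in hand: one algebraic identity handles the product, and for the power I would reuse the difference-quotient observation that underlies Example \ref{pseriesex}. Throughout, write $\alpha_n=\left|a_n/a_{n+1}\right|$ and $\beta_n=\left|b_n/b_{n+1}\right|$, so that the hypotheses read $n(\alpha_n-1)\to p$ and $n(\beta_n-1)\to q$. As observed just after Theorem \ref{raabestest}, the finiteness of these limits forces $\alpha_n\to 1$ and $\beta_n\to 1$; and of course $\alpha_n,\beta_n>0$ because the terms are nonzero.

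For the product statement, the key step is the identity
\[
n\bigl(\alpha_n\beta_n-1\bigr)=\beta_n\cdot n(\alpha_n-1)+n(\beta_n-1),
\]
obtained by writing $\alpha_n\beta_n-1=\beta_n(\alpha_n-1)+(\beta_n-1)$. Since $\beta_n\to 1$ and $n(\alpha_n-1)\to p$, the first term on the right tends to $p$, while the second tends to $q$. As $\left|a_nb_n/(a_{n+1}b_{n+1})\right|=\alpha_n\beta_n$, this gives the first assertion.

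For the power statement, fix a real number $k$ and set $h_n=\alpha_n-1$, so that $h_n\to 0$ and $nh_n\to p$; note $1+h_n=\alpha_n>0$, so $\alpha_n^{\,k}$ is defined for every $n$. Introduce $g(x)=\bigl((1+x)^k-1\bigr)/x$ for $x>-1$ with $x\neq 0$, together with $g(0)=k$; its continuity at $0$ is precisely the statement that $f(x)=x^k$ satisfies $f^{\prime}(1)=k$, exactly as in Example \ref{pseriesex}. Then
\[
n\bigl(\alpha_n^{\,k}-1\bigr)=(nh_n)\,g(h_n),
\]
an identity that is valid for every $n$ — and in particular when $h_n=0$, since both sides are then $0$. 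Letting $n\to\infty$, continuity of $g$ gives $g(h_n)\to k$ while $nh_n\to p$, so the product tends to $kp$, as claimed.

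I do not anticipate a serious obstacle; the one point requiring care is the bookkeeping in the power case, where $\alpha_n$ may equal $1$ for some (or even infinitely many) values of $n$. Packaging the difference quotient into the single continuous function $g$, rather than manipulating the fraction $\bigl((1+h_n)^k-1\bigr)/h_n$ directly, is what lets me assert the identity above for \emph{all} $n$ and pass to the limit without case distinctions.
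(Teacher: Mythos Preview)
Your proof is correct and follows essentially the same route as the paper: the product is handled by the same add-and-subtract identity (the paper groups it as $(\alpha_n-1)+\alpha_n(\beta_n-1)$ rather than your $\beta_n(\alpha_n-1)+(\beta_n-1)$, a cosmetic difference), and the power is handled by the same continuous difference-quotient function (the paper centers it at $1$ via $g(x)=(x^{k}-1)/(x-1)$ with $g(1)=k$, while you translate to $0$). Your explicit remark that the identity $n(\alpha_n^{\,k}-1)=(nh_n)\,g(h_n)$ holds even when $h_n=0$ is a nice touch that the paper leaves implicit.
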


\begin{proof}
First of all, note that
\begin{align*}
n\left(\left|\frac{a_{n}b_{n}}{a_{n+1}b_{n+1}}\right|-1\right)&=n\left(\frac{|a_{n}b_{n+1}|}{|a_{n+1}b_{n+1}|}+\frac{|a_{n}b_{n}|-|a_{n}b_{n+1}|}{|a_{n+1}b_{n+1}|}-1\right)\\
&=n\left(\left|\frac{a_{n}}{a_{n+1}}\right|-1+\left|\frac{a_{n}}{a_{n+1}}\right|\left(\frac{|b_{n}|-|b_{n+1}|}{|b_{n+1}|}\right)\right)\\
&=n\left(\left|\frac{a_{n}}{a_{n+1}}\right|-1\right)+\left|\frac{a_{n}}{a_{n+1}}\right|n\left(\left|\frac{b_{n}}{b_{n+1}}\right|-1\right)\text{.}
\end{align*}
Since
\[
\lim_{n\to\infty}\left|\frac{a_{n+1}}{a_{n}}\right|=1\text{,}
\]
our first assertion follows from the Algebraic Limit Theorem.

To prove the second assertion, consider the continuous function
\[
g(x)=\begin{cases}
\displaystyle\frac{x^{k}-1}{x-1}, & x\neq 1\\
k, & x=1
\end{cases}\text{.}
\]
Observe that
\[
n\left(\left|\frac{a_{n}}{a_{n+1}}\right|^{k}-1\right)=n\left(\left|\frac{a_{n}}{a_{n+1}}\right|-1\right)g\!\left(\left|\frac{a_{n}}{a_{n+1}}\right|\right)
\]
for all $n$.  Since $|a_{n}/a_{n+1}|$ is converging to $1$, the expression above converges to the product of $p$ and $k$.
\end{proof}

In other words, if $\sum_{n=1}^{\infty}a_{n}$ and $\sum_{n=1}^{\infty}b_{n}$ have values $p$ and $q$ with respect to Raabe's Test, then $\sum_{n=1}^{\infty}a_{n}b_{n}$ has value $p+q$.  Likewise, whenever it is defined, $\sum_{n=1}^{\infty}a_{n}^{k}$ has value $kp$.  In particular, $\sum_{n=1}^{\infty}1/a_{n}$ has value $-p$ and $\sum_{n=1}^{\infty}a_{n}/b_{n}$ has value $p-q$.  Besides confirming our interpretation of Raabe's Test, these observations can be quite useful from a computational perspective

As noted by Knopp \cite[p.\ 287]{knopp}, there is an equivalent formulation of Raabe's Test:
\begin{equation}\label{schlomo}
p=\lim_{n\to\infty}n\log\left|\frac{a_{n}}{a_{n+1}}\right|\text{,}
\end{equation}
where $\log$ denotes the natural logarithm.  (The equivalence of (\ref{schlomo}) to the standard form of Raabe's Test can be deduced from the inequalities $(x-1)/x\leq\log x\leq x-1$.) This version, which is sometimes called \textit{Schl\"{o}milch's Test} \cite{prus2}, makes the results of Proposition \ref{raabealg} seem a bit more apparent.
\bigskip

If $p>0$, Proposition \ref{raabezeroprop} shows that $(a_{n})$ must converge to $0$.  Hence it follows from Proposition \ref{raabealg} that $(a_{n})$ is unbounded whenever $p<0$.  Let us pause for a moment to consider the case where $p=0$.  
From the perspective of Raabe's Test, such a series ``looks like" the $p$-series
\[
\sum_{n=1}^{\infty}\frac{1}{n^{0}}=1+1+1+\cdots\text{.}
\]
Nevertheless, there are examples with $p=0$ for which $(a_{n})$ is either convergent to $0$ or unbounded.

\begin{ex}\label{logex}
Taking $a_{n}=1/\log(n+1)$, we see that
\begin{align}
n\left(\left|\frac{a_{n}}{a_{n+1}}\right|-1\right)&=n\left(\frac{\log(n+2)}{\log(n+1)}-1\right)\nonumber\\
&=\frac{n\bigl(\log(n+2)-\log(n+1)\bigr)}{\log(n+1)}\nonumber\\
&=\frac{\log\!\left(\bigl(1+\frac{1}{n+1}\bigr)^{n}\right)}{\log(n+1)}\text{.}\label{logquot}
\end{align}
Since the numerator of (\ref{logquot}) converges to $\log e=1$, the entire expression converges to $0$.  Proposition \ref{raabealg} shows that
\[
\lim_{n\to\infty}n\left(\left|\frac{b_{n}}{b_{n+1}}\right|-1\right)
\]
is also $0$, where $b_{n}=\log(n+1)$.
\end{ex}

In other words, there is some ``wiggle room" with respect to how closely a series must resemble the corresponding series $\sum_{n=1}^{\infty}1/n^{p}$.  We are now in a position to justify the final assertion in the statement of Raabe's Test, as well as a remark made shortly before the statement of Proposition \ref{raabezeroprop}.

\begin{ex}\label{raabeex}
We would like to illustrate the range of possible outcomes when $0\leq p\leq 1$.  First of all, consider the case where $p=0$.  The Alternating Series Test shows that
\[
\sum_{n=1}^{\infty}\frac{(-1)^{n-1}}{\log(n+1)}
\]
is conditionally convergent.  On the other hand, the series
\[
\sum_{n=1}^{\infty}\frac{1}{n^{0}}=1+1+1+\cdots
\]
is divergent.

For $0<p\leq 1$, the series
\[
\sum_{n=1}^{\infty}\frac{(-1)^{n-1}}{n^{p}}
\]
is conditionally convergent and has value $p$ with respect to Raabe's test.  Similarly, the series
\[
\sum_{n=1}^{\infty}\frac{1}{n^{p}}
\]
is divergent. 

We still need to identify a series with $p=1$ that is absolutely convergent.  It follows from Proposition \ref{raabealg} that
\[
\sum_{n=1}^{\infty}\frac{1}{(n+1)\bigl(\log(n+1)\bigr)^{2}}
\]
has value $1$ with respect to Raabe's Test.  Moreover, the Integral Test shows that this series converges absolutely.
\end{ex}

The next result is particularly useful when applying Raabe's Test to concrete examples.

\begin{prop}\label{sumprop}
Suppose $(a_{n})$ and $(b_{n})$ are sequences consisting of positive numbers, for which
\[
p=\lim_{n\to\infty}n\left(\frac{a_{n}}{a_{n+1}}-1\right)
\]
and
\[
q=\lim_{n\to\infty}n\left(\frac{b_{n}}{b_{n+1}}-1\right)
\]
both exist. If $p<q$, then
\begin{equation}\label{rsum}
\lim_{n\to\infty}n\left(\frac{a_{n}+b_{n}}{a_{n+1}+b_{n+1}}-1\right)
\end{equation}
and
\begin{equation}\label{rdiff}
\lim_{n\to\infty}n\left(\left|\frac{a_{n}-b_{n}}{a_{n+1}-b_{n+1}}\right|-1\right)
\end{equation}
both exist and are equal to $p$.
\end{prop}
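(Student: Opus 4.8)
The plan is to reduce both claims to a single observation: because $p<q$, the ratio $b_n/a_n$ tends to $0$, so in a precise sense the summand with the smaller Raabe value dominates. Once that is established, both limits drop out of the Algebraic Limit Theorem after an elementary rearrangement.

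First I would show that $(b_n/a_n)$ converges to $0$. By the power case ($k=-1$) of Proposition \ref{raabealg}, the sequence $(1/a_n)$ has value $-p$ with respect to Raabe's Test; by the product case, $(b_n/a_n)=(b_n\cdot 1/a_n)$ then has value $q-p$. Since $q-p>0$, Proposition \ref{raabezeroprop} forces $b_n/a_n\to 0$. In particular $0<b_n<a_n$, and hence $a_{n+1}-b_{n+1}>0$, for all sufficiently large $n$, so the absolute value appearing in (\ref{rdiff}) is superfluous once $n$ is large.

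Next I would put the two Raabe expressions into a common form. Set $\alpha_n=n\bigl(a_n/a_{n+1}-1\bigr)$ and $\beta_n=n\bigl(b_n/b_{n+1}-1\bigr)$, so that $n(a_n-a_{n+1})=\alpha_n a_{n+1}$ and $n(b_n-b_{n+1})=\beta_n b_{n+1}$. A direct computation gives
\[
n\left(\frac{a_n+b_n}{a_{n+1}+b_{n+1}}-1\right)=\frac{\alpha_n a_{n+1}+\beta_n b_{n+1}}{a_{n+1}+b_{n+1}}=\frac{\alpha_n+\beta_n\bigl(b_{n+1}/a_{n+1}\bigr)}{1+b_{n+1}/a_{n+1}}
\]
and, for $n$ large,
\[
n\left(\frac{a_n-b_n}{a_{n+1}-b_{n+1}}-1\right)=\frac{\alpha_n a_{n+1}-\beta_n b_{n+1}}{a_{n+1}-b_{n+1}}=\frac{\alpha_n-\beta_n\bigl(b_{n+1}/a_{n+1}\bigr)}{1-b_{n+1}/a_{n+1}}\text{.}
\]
Since $\alpha_n\to p$, $\beta_n\to q$, and $b_{n+1}/a_{n+1}\to 0$ — so that $\beta_n\cdot b_{n+1}/a_{n+1}\to 0$, being the product of a bounded sequence with a null sequence — the Algebraic Limit Theorem shows both expressions converge to $p$.

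The one genuinely substantive point is the first step: recognizing that the smaller Raabe value wins in the form $b_n/a_n\to 0$, which is precisely what Propositions \ref{raabealg} and \ref{raabezeroprop} provide. After that, the argument is routine bookkeeping, the only cautions being that $a_{n+1}$ and $b_{n+1}$ are positive by hypothesis (so the denominators in the rearranged quotients are legitimate) and that $a_{n+1}-b_{n+1}$ is positive for large $n$ (so dividing by it, and discarding the absolute value, is justified).
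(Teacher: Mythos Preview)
Your proof is correct and follows essentially the same route as the paper: both arguments use Propositions \ref{raabealg} and \ref{raabezeroprop} to conclude that $b_n/a_n\to 0$, then perform the same algebraic decomposition (your compact form $\dfrac{\alpha_n\pm\beta_n(b_{n+1}/a_{n+1})}{1\pm b_{n+1}/a_{n+1}}$ is exactly the paper's two-term sum after collecting over a common denominator) and finish with the Algebraic Limit Theorem. The only differences are cosmetic---your introduction of the shorthand $\alpha_n,\beta_n$ and your choice to establish $b_n/a_n\to 0$ before rather than after the rearrangement.
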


\begin{proof} Note that
\begin{align*}
n\left(\frac{a_{n}+b_{n}}{a_{n+1}+b_{n+1}}-1\right)=n\left(\frac{a_{n}-a_{n+1}}{a_{n+1}+b_{n+1}}+\frac{b_{n}-b_{n+1}}{a_{n+1}+b_{n+1}}\right)&\\
=n\left(\frac{a_{n}}{a_{n+1}}-1\right)\left(\frac{1}{1+\displaystyle\frac{b_{n+1}}{a_{n+1}}}\right)+n\left(\frac{b_{n}}{b_{n+1}}-1\right)\left(\frac{\displaystyle\frac{b_{n+1}}{a_{n+1}}}{1+\displaystyle\frac{b_{n+1}}{a_{n+1}}}\right)\text{.}&
\end{align*}
The sequence $(b_{n}/a_{n})$ has value $q-p>0$ with respect to Raabe's Test, so Proposition \ref{raabezeroprop} dictates that $(b_{n}/a_{n})$ converges to $0$.  Thus the expression above converges to $p\cdot 1+q\cdot 0=p$.

Since $(b_{n}/a_{n})$ converges to $0$, there is a natural number $N$ such that $b_{n}<a_{n}$ for $n\geq N$.  Consequently $|a_{n}-b_{n}|=a_{n}-b_{n}$ for $n\geq N$, and hence
\begin{align*}
n\left(\left|\frac{a_{n}-b_{n}}{a_{n+1}-b_{n+1}}\right|-1\right)=n\left(\frac{a_{n}-a_{n+1}}{a_{n+1}-b_{n+1}}-\frac{b_{n}-b_{n+1}}{a_{n+1}-b_{n+1}}\right)&\\
=n\left(\frac{a_{n}}{a_{n+1}}-1\right)\left(\frac{1}{1-\displaystyle\frac{b_{n+1}}{a_{n+1}}}\right)-n\left(\frac{b_{n}}{b_{n+1}}-1\right)\left(\frac{\displaystyle\frac{b_{n+1}}{a_{n+1}}}{1-\displaystyle\frac{b_{n+1}}{a_{n+1}}}\right)\text{.}&
\end{align*}
Therefore this expression also converges to $p$.
\end{proof}

The result above is not valid when $p=q$, since
\[
\frac{1}{n^{p}}-\left(\frac{1}{n^{p}}+\frac{1}{n^{p+1}}\right)=\frac{1}{n^{p+1}}\text{.}
\]
The analogous result may fail to hold for sequences consisting of nonzero terms, even when
\[
\lim_{n\to\infty}n\left(\left|\frac{a_{n}}{a_{n+1}}\right|-1\right)=p<q=\lim_{n\to\infty}n\left(\left|\frac{b_{n}}{b_{n+1}}\right|-1\right)\text{.}
\]
If $a_{n}=1$ and $b_{n}=(-1)^{n-1}/n$, for example, then both (\ref{rsum}) and (\ref{rdiff}) are undefined.\bigskip

For any non-negative integer $m$, Example \ref{pseriesex} shows that the monomial $n^{m}$ has value $-m$ with respect to Raabe's Test.  Hence Proposition \ref{sumprop} allows us to compute the value for any series whose terms can be expressed as a polynomial in $n$.

\begin{ex}\label{polyex}
Consider the series
\[
\sum_{n=1}^{\infty}6n^{4}-11n^{3}-3n^{2}+7n+5=\sum_{n=1}^{\infty}\bigl(6n^{4}+7n+5\bigr)-\bigl(11n^{3}+3n^{2}\bigr)\text{.}
\]
Proposition \ref{sumprop} shows that $\sum_{n=1}^{\infty}6n^{4}+7n+5$ and $\sum_{n=1}^{\infty}11n^{3}+3n^{2}$ have values $-4$ and $-3$ with respect to Raabe's Test.  Thus their difference, which is the original series, has value $-4$.
\end{ex}

The reasoning from this example leads us to the following observation.

\begin{cor}\label{polyraabe}
Let $\sum_{n=1}^{\infty}a_{n}$ be a series consisting of nonzero terms.  If $a_{n}=h(n)$ for a polynomial $h$ of degree $m$, the series has value $-m$ with respect to Raabe's Test.
\end{cor}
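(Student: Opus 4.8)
The plan is to generalize the reasoning of Example \ref{polyex}, reducing everything to Example \ref{pseriesex} and Proposition \ref{sumprop}. I would begin with two elementary observations: a single monomial $cn^{k}$ with $c>0$ has value $-k$ with respect to Raabe's Test (the constant $c$ cancels in $a_{n}/a_{n+1}$, so this is Example \ref{pseriesex} with $p=-k$), and negating all the terms of a series leaves its value unchanged (since $|(-a_{n})/(-a_{n+1})|=|a_{n}/a_{n+1}|$). Next I would treat the case in which $h$ has non-negative coefficients, so that its values are positive for $n\geq1$, by strong induction on the degree $m$. When $m=0$ the series is a positive constant series and has value $0=-0$. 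For the inductive step, write $h(n)=c_{m}n^{m}+R(n)$ with $c_{m}>0$ and $R$ a polynomial of degree at most $m-1$ with non-negative coefficients; if $R\equiv0$ we are done by the monomial observation, and otherwise $\sum R(n)$ has value $-\deg R>-m$ by the induction hypothesis, so the sum formula (\ref{rsum}) of Proposition \ref{sumprop}, applied with $a_{n}=c_{m}n^{m}$ and $b_{n}=R(n)$, shows that $\sum h(n)$ has value $-m$.

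For a general polynomial $h$ of degree $m$ with nonzero values, I would split $h=u-v$, where $u$ is the sum of the monomials of $h$ with positive coefficients and $v$ is the sum of the negatives of those with negative coefficients, so that $u$ and $v$ both have non-negative coefficients. Since the leading coefficient of $h$ is nonzero, exactly one of $u$ and $v$ has degree $m$; call it $w$ and call the other one (of degree at most $m-1$) $z$. If $z\equiv0$, then $h=\pm w$, which has value $-m$ by the previous paragraph together with the invariance of the value under negation. Otherwise $\sum w(n)$ has value $-m$ and $\sum z(n)$ has value $-\deg z>-m$, and since $|h(n)|=|w(n)-z(n)|$, the difference formula (\ref{rdiff}) of Proposition \ref{sumprop} --- applied with $a_{n}=w(n)$ and $b_{n}=z(n)$, which is legitimate because these are positive for $n\geq1$ and $-m<-\deg z$ --- gives
\[
\lim_{n\to\infty}n\left(\left|\frac{h(n)}{h(n+1)}\right|-1\right)=\lim_{n\to\infty}n\left(\left|\frac{w(n)-z(n)}{w(n+1)-z(n+1)}\right|-1\right)=-m\text{.}
\]
The hypothesis that every $h(n)$ is nonzero is exactly what guarantees that the quotient on the left is defined.

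I do not anticipate a genuine obstacle here, since the argument is essentially the one already carried out in Example \ref{polyex}. The only points that require care are bookkeeping: choosing the splitting $h=u-v$ and identifying which part is dominant so that the positivity and strict-inequality hypotheses of Proposition \ref{sumprop} are genuinely satisfied, isolating the degenerate cases (a monomial, a nonzero constant, or $z\equiv0$) in which Proposition \ref{sumprop} is not needed, and phrasing the passage from two summands to a polynomial with arbitrarily many terms as an induction on the degree.
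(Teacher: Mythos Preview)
Your proposal is correct and follows essentially the same route the paper indicates: the paper does not give a separate proof of the corollary but simply points to Example~\ref{polyex}, whose method is exactly your split of $h$ into its positive-coefficient and negative-coefficient parts followed by Proposition~\ref{sumprop}. Your write-up is a careful elaboration of that sketch, making explicit the induction that handles sums of several monomials and isolating the degenerate cases; nothing further is needed.
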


Combining the results of this section, we can often apply Raabe's Test without much additional computation.

\begin{ex}\label{limcom}
Consider the series
\[
\sum_{n=1}^{\infty}a_{n}=\sum_{n=1}^{\infty}(-1)^{n-1}\sqrt{\frac{n^{2}-2n+3}{5n^{3}-7n^{2}+11n+13}}\text{.}
\]
Corollary \ref{polyraabe} shows that the numerator of the expression inside the radical has value $-2$ with respect to Raabe's Test and that the denominator has value $-3$.  Thus Proposition \ref{raabealg} shows that their quotient has value $1$ with respect to Raabe's test and that $\sum_{n=1}^{\infty}a_{n}$ has value $1/2$.  Therefore the series converges conditionally.

We could obtain the same result by applying the Limit Comparison Test and the Alternating Series Test, comparing $\sum_{n=1}^{\infty}|a_{n}|$ with $\sum_{n=1}^{\infty}1/n^{1/2}$.  The informal process by which we obtain the series $\sum_{n=1}^{\infty}1/n^{1/2}$ requires essentially the same steps as actually employing Raabe's Test.  Moreover, Theorem \ref{rast} makes it unnecessary to verify the hypotheses of the Alternating Series Test.
\end{ex}

\section{Pedagogical Implications}

When first learning about series, students typically encounter two fundamental classes of examples:  geometric series and $p$-series.  By analogy, it makes sense to introduce students to the Ratio Test and the Root Test (which relate to geometric series) and also to Raabe's Test (which relates to $p$-series).  The most obvious way to feature Raabe's Test in a calculus or an analysis course would be as a supplement to the Limit Comparison Test.  There are several reasons that such an innovation might be advantageous:
\begin{itemize}
\item We generally use the Limit Comparison Test when the Ratio Test and the Root Test are inconclusive, which is precisely the situation in which Raabe's Test may be applicable.
\item When applying the Limit Comparison Test, we often compare the series in which we are interested with a $p$-series.  Raabe's Test serves a similar function, but does not require that we know the value of $p$ beforehand.   (As noted in Example \ref{limcom}, the process for finding a series with which to compare may involve essentially the same steps as applying Raabe's Test.)
\item In certain cases, Raabe's Test allows us immediately to draw conclusions about conditional convergence.
\end{itemize}

\noindent In the context of $p$-series, the one instance in which the Limit Comparison Test can yield more information than Raabe's Test is when $p=1$.  That case is inconclusive with respect to Raabe's Test, but a comparison with the harmonic series may demonstrate divergence. We might consider using the Limit Comparison Test in a more targeted manner, reserving it primarily for this situation, and applying Raabe's Test otherwise.

\section*{Acknowledgments}

The author is indebted to Warren P.\ Johnson for many helpful discussions during the preparation of this article.  The author would also like to thank Edward Omey for his thoughtful correspondence and the anonymous reviewers for their suggestions regarding the organization and structure of the article.

 \end{document}